\def\mapright#1#2#3{\smash{\mathop{\hbox to
#3{\rightarrowfill}}\limits^{#1}_{#2}}}
\def\mapleft#1#2#3{\smash{\mathop{\hbox to
#3{\leftarrowfill}}\limits^{#1}_{#2}}}
\def\mapright#1#2{\smash{\mathop{\hbox to 0.90cm{\rightarrowfill}}\limits^{#1}_{#2}}}
\def\mapleft#1#2{\smash{\mathop{\hbox to 0.90cm{\leftarrowfill}}\limits^{#1}_{#2}}}
\def\mapleftright#1#2{\smash{\mathop{\hbox to 0.80cm{\leftarrowfill \rightarrowfill}}\limits^{#1}_{#2}}}
\title{Framed link presentations of 3-manifolds\\ by an $O(n^2)$ algorithm, II: colored complexes 
and boundings in their complexity
\footnote{2010 Mathematics Subject Classification: 
57M25 and 57Q15 (primary), 57M27 and 57M15 (secondary)}} 
\author{Sóstenes Lins and Ricardo Machado}
\date{\today}
\begin{document}

\maketitle

\begin{abstract}
This is part 2 of a 3-part article where we provide an $O(n^2)$-algorithm to produce a surgery 
presentation of a 3-manifold induced by a gem with a resolution. In this part we produce
a sequence of colored simplicial 2-complexes which are inverses and dual to the sequence of
gems produced in the first part. The refinements of the PL2-faces 
that keep appearing are idempotent: the second refinement of a PL2-face is isomorphic to its
first refinement. This fact inhibits exponentiability.
\end{abstract}

\section{Colored 2-complexes, 2-skeleton of $\mathcal{H}_m^\star$}

This is the second of 3 closely related articles.
References for the companion papers are \cite{linsmachadoA2012} and \cite{linsmachadoC2012}.

We start with $\mathcal{H}_1^\star$ which is easily obtained from $\mathcal{H}_1$.
We get $\mathcal{H}_m^\star$ from $\mathcal{H}_{m}$ and  $\mathcal{H}_{m-1}^\star$
by displaying the difference between $\mathcal{H}_m^\star$ and its predecessor.
The difference is precisely the balloon which becomes a pillow, encoded as
$(c:u$-$r)$, where:
$c\in\{0,1\}$ and $u, r$ are the odd vertices in the $J^2$-gem
defining the $c$-flip with $u,v=u+1$ corresponding to the 
2 PL3-faces of the balloon's head.

\subsection{The sequence of combinatorial 2-complexes: 
$\mathcal{H}_1^\star,$ $\mathcal{H}_2^\star,$ $\ldots,\mathcal{H}_{n}^\star$} 

For $m \in \{1,2,\ldots,n\}$ define 
$\mathcal{H}_m^\star$ to be the 2-skeleton of the dual of the gem  
$\mathcal{H}_m\backslash{2n}$. Removing vertex $2n$ corresponds to removing a 
tetrahedron from $S^3$ and so we know that $\mathcal{H}_m^\star$ embeds into $\mathbb{R}^3$.
Our goal is to obtain an explicit embedding of
$(\mathcal{J}^2)^\star=\mathcal{H}^\star_{n}$ into $\mathbb{R}^3$. 
The embedding will be 
{\em explicit} in the sense that
$\mathcal{H}_m^\star$ will be given by a 2-dimensional 
PL simplical complex where the $0$-simplices 
are endowed with $\mathbb{R}^3$-coordinates.

It is a simple matter to obtain the required embedding for the bloboid
$\mathcal{H}_1^\star$. Our initial strategy is to 
proceed from a combinatorial 2-complex
of $\mathcal{H}_m^\star$ to generate a combinatorial description of the next,
$\mathcal{H}_{m+1}^\star$. In the process we give upper bounds
for the number of $0$-, $1$- and $2$-simplices arising. These are quadratic polynomials
in $|V\mathcal{J}^2|$, the number of vertices of the gem $\mathcal{J}^2$. Note that
$|V\mathcal{J}^2|=|V\mathcal{G}|$, where $\mathcal{G}$ is the original resoluble gem.

The inverse of a 2-dipole thickening is a {\em 3-dipole slimming}. We need to consider the combinatorial 
duals of some objects. To a 2-dipole corresponds a \index{pillow} {\em pillow} in the dual, namely two tetrahedra
with two faces in common. To a 3-dipole (or blob) and a color involved in it corresponds
a {\em balloon} \index{balloon} in the dual. The balloon is formed by two tetrahedra with 3 faces in common
together with a triangle sharing an edge with the two tetrahedra. The dual of a 3-dipole
slimming is a \index{balloon-pillow move} {\em balloon-pillow move}, or {\em $bp$-move}. \index{bp-move} A sequence of these moves related 
to the $r^{24}_5$-example are depicted in Figs.  \ref{fig:Hstarsequence01} 
and \ref{fig:Hstarsequence02}.

There is a simple topological interpretation between primal and dual complexes, given in 
\cite{lins1995gca} pages 38, 39. Let's take a look 
at this interpretation in our context. This will help 
 to understand the $PL$-embedding $\mathcal{H}_m^\star$.
In what follows the $k$ in PL$k$-face means the dimension $k\in\{0, 1, 2,3\}$ of the PL-face.

\begin{itemize}
 \item [i.] 
a vertex $v$ in $G$ $\rightleftharpoons$ a 
solid PL-tetrahedron or PL3-face,
denoted by $\nabla_v$ in the dual of the gem whose
PL0-faces are labelled $z_0$, $z_1$, $z_2$ e $z_3^v$; 
in this work is enought to work with the 
boundary of a PL3-face; this is topologically a sphere
$S^2$ with four PL2-faces one of each color; the 3-simplices forming
a PL3-face need not be explicitly specified;
 \item [ii.] an $i$ colored edge $e_i$ in $G$ $\rightleftharpoons$ a set 
of $i$-colored 2-simplices defining 
a PL2$_i$-face in the dual of the gem;
 \item [iii.] a bigon $B_{ij}$ using the colors $i, j$ in 
$G$ $\rightleftharpoons$ a set of  1-simplices $b_{ij}$
 in $\mathcal{H}_n^\star$ defining a 
PL1$_{ij}$-face;
 \item [iv.] an $\widehat{i}$-residue $V_i$ in $G$ 
$\rightleftharpoons$ a 0-simplex in $\mathcal{H}_n^\star$ defining a PL0$_i$-face.
\end{itemize}

We define the combinatorial 2-dimensional PL complex $\mathcal{H}_1^{\star}$ as follows.

\begin{figure}[!htb]
\begin{center}
\includegraphics[scale=1]{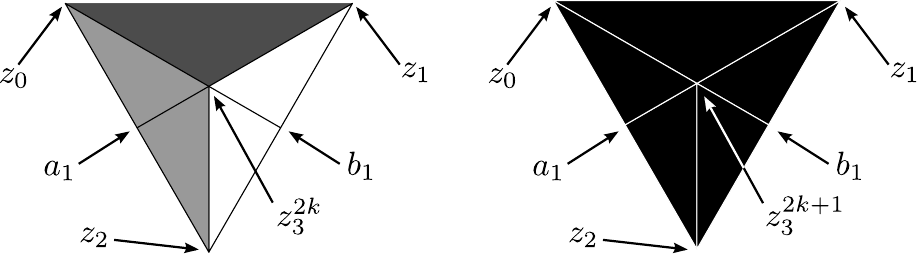}
\caption{PL2-faces of $\mathcal{H}^\star_1$.}
\label{fig:dualPL2}
\end{center}
\end{figure}

The 0-simplices $z_0$, $z_1$ and $z_2$ are positioned in clockwise order 
as the vertices of an equilateral triangle 
of side $\mathcal{\varphi}$ in the 
$xy$-plane so that $z_0z_1$ is parallel to the $x$-axis and the center of the triangle
coincides with the the origin of an $\mathbb{R}^3$-cartesian system. 
The 0-simplex $a_1$ is $\frac{z_0+z_2}{2}$. The 0-simplex
$b_1$ is $\frac{z_2+z_1}{2}$.
Let the 0-simplices $z_3^j$ be defined as $z_3^j=(0,0,(2n-j)\phi)$, $1\le j \le 2n$, where $\phi$, as $\varphi$,
is a positive constant, see Fig. \ref{fig:dualPL2}. 
It is convenient, to leave $\phi$ and $\varphi$ as independent arbitrary positive constants, 
for adjusting the visual aspect of the embedded $\mathcal{H}_n^\star$.

Suppose $u$ is an odd vertex of the $J^2$-gem, $u'=u-1$, $v=u+1$ and $v'= v+1$.
The dual of a $\widehat{3}$-residue is $z_3^j$ where $j$ is even. 
When $j$ is odd, then $z_3^j$ is a $0$-simplex in the middle of a PL2$_3$-face,
incident to five 2-simplices of color 3.
The dual of the 03-gon is the PL1-face formed by the pair of 1-simplices $z_1b_1$ 
and $b_1z_2$. The dual of the 13-gon is the PL1-face formed by the pair of 
1-simplices $z_0a_1$ and $a_1z_2$. 
The dual of the 23-gon is the PL1-face formed by the 1-simplex $z_0z_1$.
The dual of the 01-gon relative to vertices $u$ and $v$ is the 1-simplex $z_2z_3^v$. 
The dual of the 02-gon relative to vertices $u$ and $v$ is the 1-simplex $z_1z_3^v$.
The dual of the 12-gon relative to vertices $u$ and $v$ 
is the 1-simplex $z_0z_3^v$.
The dual of a 3-colored edge $u'u$ is the image of PL2$_3$-face with odd index $u$ in the vertices.
The dual of an $i$-colored 
edge $uv$ with $i\in\{0, 1, 2\}$ is the PL2$_i$-face with even index $v$.

Before presenting $\mathcal{H}_m^\star$, $1\le m \le n,$ and its embeddings, we need to understand
the dual of the $(pb)^\star$-move and its inverse. In the primal, to apply a $(pb)^\star$-move, we need
a blob and a 0- or 1-colored edge. The dual of this pair is the {\em balloon:} the \index{balloon's head} {\em balloon's head} is
the dual of the blob; the {\em balloon's tail} \index{balloon's tail} is the dual of the $i$-edge. To make it easier to
understand, the $(pb)^\star$-move can be factorable into a 3-dipole move followed by a 2-dipole move, 
so in the dual, it is a smashing of the head of the balloon followed by the pillow move 
described in the book \cite{lins1995gca}, page 39.
This composite move is the {\em balloon-pillow move} \index{balloon-pillow move} or \index{bp-move} {\em bp-move}. Considering it as a single move 
is easier to implement and the code is faster. Restricting our basic change in the colored 2-complex 
to $bp$-moves we have nice theoretical properties which are responsible for avoiding an exponencial
proccess. In what follows we describe the $bp$-move assuming that the balloon's tail is 0-colored
using a generic balloon's tail, of which we just draw the contour.
The other case, color 1, is similar.

\begin{figure}[!htb]
\begin{center}
\includegraphics[width=14cm]{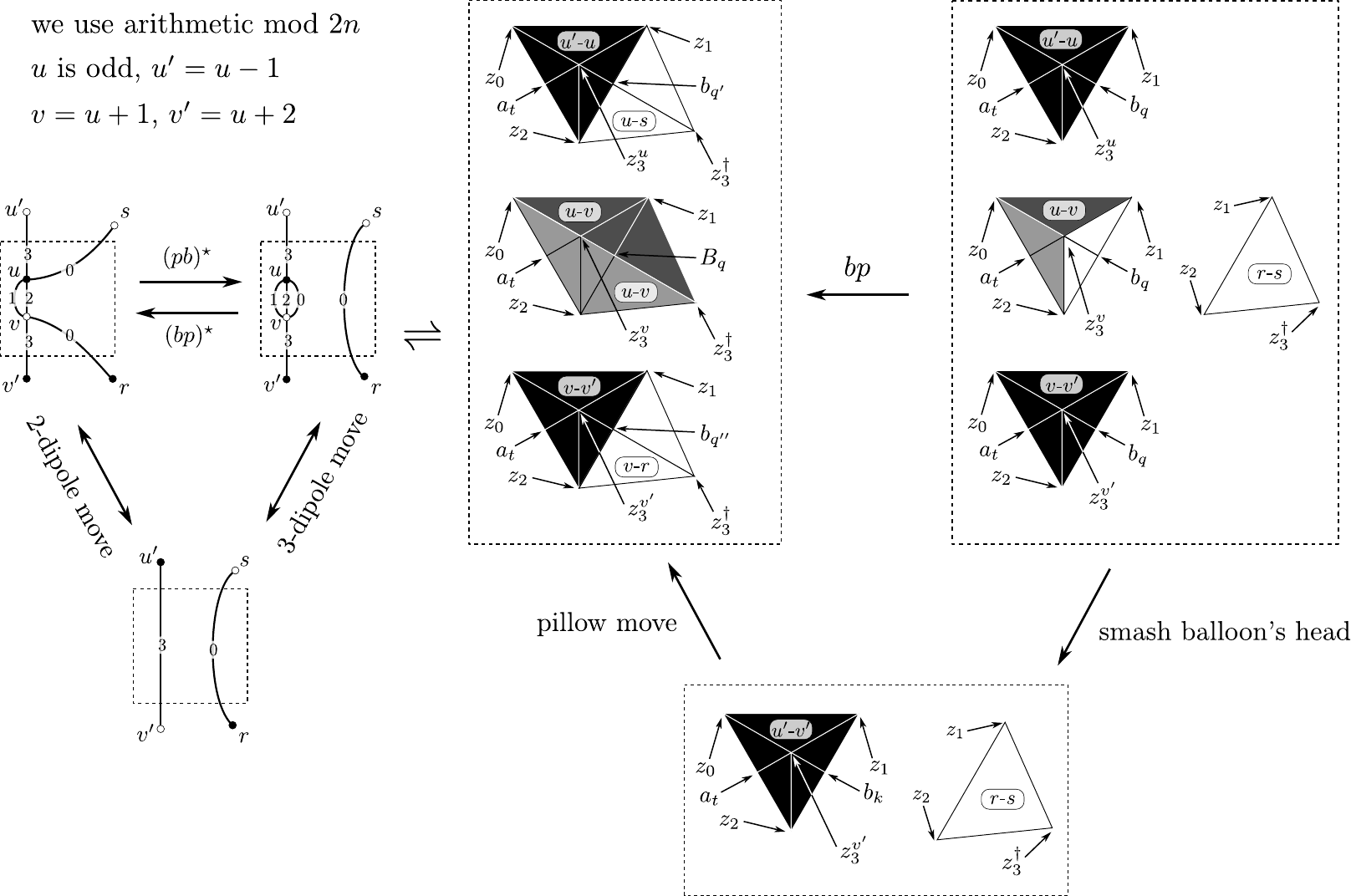}
\caption{Primal and dual $bp$-moves.}
\label{fig:pillowother2NOVO}
\end{center}
\end{figure}

\begin{itemize}
 \item [i.] if the image of $v_5^u$ and $v_5^v$ is $b_q$, create two 0-simplices $b_{q'}$ and $b_{q''}$, 
define the images of $v_5^u$ and $v_5^{v'}$ as $b_{q'}$ 
and $b_{q''}$ and change the label of the image of $v_5^{v}$ from $b_q$ to $B_q$;
 \item [ii.] make two copies of the PL2$_0$-face, if necessary, 
refine each, from the middle vertex of the segment $z_2z_1$ to the third vertex $z_3^\dagger$, where $\dagger=j,$ for an adequate height $j$;
 \item [iii.] change the colors of the medial layer of the pillow as specified by the 
dual structure, namely by the current $J^2B$-gem, see Fig. \ref{fig:pillowother2NOVO}.
\end{itemize}


The choice of the letters $P, B, R, G$ in the set {\em types} of next proposition 
comes from the colors $0=(P)ink$, $1=(B)lue$, $2=(R)ed$ and $3=(G)reen$. In the next proposition $R_{2k-1}^b$ is a PL2$_2$-face which is inside
the pillow neighboring a PL2$_1$-face. Similarly  $R_{2k-1}^p$ is a PL2$_2$-face which is inside
the pillow neighboring a PL2$_0$-face.
\begin{proposition}
\label{prop:allkinds}
Each PL2-face of the combinatorial simplicial complex $\mathcal{H}_m^\star$,
$1\le m \le n$, is isomorphic to 
one in the {\em set of types of triangulations} \index{types of triangulations}
$$\{G, P_{2k-1}, P_{2k-1}', B_{2k-1}, B_{2k-1}', R_{2k-1}^b, R_{2k-1}^p \ | 
\ k\in \mathbb{N} \},$$
described in Fig. 
\ref{fig:allkinds3}, where the index means the number of 
edges indicated and is called the {\em
rank of the type}. \index{rank of the type} Moreover, the PL2-faces that appear, as duals of the gem edges, 
have the minimum number of 2-simplices.
\end{proposition}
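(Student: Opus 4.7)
The plan is to prove the proposition by induction on $m$, exploiting the fact that the only combinatorial change between $\mathcal{H}_m^\star$ and $\mathcal{H}_{m+1}^\star$ is a single $bp$-move, whose local effect on PL2-faces is completely described in Fig.~\ref{fig:pillowother2NOVO} and in the three-step recipe (i)--(iii) given just before the proposition.

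For the base case $m=1$, I would read off the PL2-faces of $\mathcal{H}_1^\star$ directly from the explicit embedding of the bloboid: the dual of the 23-gon is a single triangle $z_0 z_1 z_3^v$ (with $v$ even), giving a PL2$_2$-face of type $R^\ast_{1}$; the duals of the 03-gons and 13-gons are pairs of triangles meeting along the medial segments through $a_1$ and $b_1$, giving PL2$_0$- and PL2$_1$-faces of types $P_{1}$ and $B_{1}$; and the dual of each 3-colored edge is a PL2$_3$-face obtained by the five-triangle fan around the odd-indexed $z_3^u$, giving the type $G$. A direct comparison with Fig.~\ref{fig:allkinds3} shows each of these is on the list, and moreover carries the minimum number of 2-simplices compatible with the identifications required by items (i)--(iv) of the primal/dual dictionary.

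For the inductive step, assume the statement holds for $\mathcal{H}_m^\star$. A $bp$-move touches only the PL2-faces meeting the balloon's head and its tail. By the hypothesis, each such face already belongs to the list $\{G, P_{2k-1}, P_{2k-1}', B_{2k-1}, B_{2k-1}', R_{2k-1}^b, R_{2k-1}^p\}$ with some rank. I would go through the three recipe steps and track the effect: step~(i) only creates new PL0-simplices inside an existing PL2$_2$-face, turning an $R^p_{2k-1}$ (or $R^b_{2k-1}$) into $R^p_{2k+1}$ (or $R^b_{2k+1}$) by adding two edges; step~(ii) duplicates the PL2$_0$-face of the tail and refines each copy from the midpoint of $z_2 z_1$ to the apex $z_3^\dagger$, so a $P_{2k-1}$ becomes the primed $P_{2k-1}'$ (or vice-versa) and its rank is augmented by the two new refinement edges, landing on $P_{2k+1}$ or $P_{2k+1}'$; step~(iii) recolors the medial layer, which by the structure of the $J^2 B$-gem produces precisely a $B_{\cdot}$- or $B_{\cdot}'$-face and an $R_{2k-1}^b$ neighboring the new blue. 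Since the list is closed under each of these atomic changes, every PL2-face of $\mathcal{H}_{m+1}^\star$ again belongs to it.

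For the minimality clause, I would argue that each PL2-face carries exactly the subdivisions forced by the 0-simplices lying on its boundary or interior, which in turn are dictated items (iii)--(iv) of the primal/dual dictionary: a PL0-face is created only when a new $\widehat{i}$-residue appears in the primal gem, and a PL1-face refinement only when a bigon is subdivided. No $bp$-move inserts additional vertices or edges beyond those forced by the recipe, so the count is minimum at every stage. The main obstacle I expect is the bookkeeping in the inductive step: one must verify that after step~(iii) the recoloring really produces one of the $R^b$/$R^p$ types (and not some novel triangulation) for every possible local configuration of the balloon's tail, including the symmetric case when the tail is 1-colored rather than 0-colored. This amounts to a finite case check against Fig.~\ref{fig:pillowother2NOVO}, but it is where a careful enumeration is indispensable.
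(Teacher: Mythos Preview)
Your inductive scheme---verify that the list of types is closed under a single $bp$-move---is exactly the paper's approach, so the overall strategy is correct. The difficulty is that your actual case analysis, which is where the proof lives, is wrong in the details.

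Two concrete errors. First, step~(i) of the recipe does not act on a pre-existing $R$-face: the tail of a balloon is always the dual of a $0$- or $1$-colored edge, hence of type $P$ or $B$, never $R$. The $R$-face that shows up after the move is the \emph{new} medial layer of the pillow, produced in step~(iii), and its rank equals the tail's rank $2k-1$. Second, and more importantly, the two copies of the tail do \emph{not} have their rank increased. A tail of type $P_{2k-1}$ is refined in step~(ii) and yields copies of type $P_{2k-1}'$ (same rank, primed); a tail already of type $P_{2k-1}'$ needs no further refinement. The face whose rank jumps to $2k+1$ is the \emph{new} face of the \emph{opposite} color created inside the pillow: a $P$-tail produces a $B_{2k+1}$, a $B$-tail produces a $P_{2k+1}$. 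The paper's closure table is
\[
\begin{aligned}
P_{2k-1} &\longmapsto P_{2k-1}',\ \widehat{P}_{2k-1}',\ B_{2k+1},\ R_{2k-1}^b,\\
P_{2k-1}' &\longmapsto P_{2k-1},\ B_{2k+1},\ R_{2k-1}^b,
\end{aligned}
\]
and symmetrically with $P\leftrightarrow B$ and $b\leftrightarrow p$. The idempotency $(X')'=X'$---a primed face is never refined again---is precisely the point your step~(ii) analysis misses, and it is what keeps the ranks (and hence the simplex counts in Lemma~\ref{theo:thequadratic}) from growing; without it neither the closure argument nor the quadratic bounds go through.
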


\begin{figure}[!htb]
\begin{center}
\includegraphics[width=15cm]{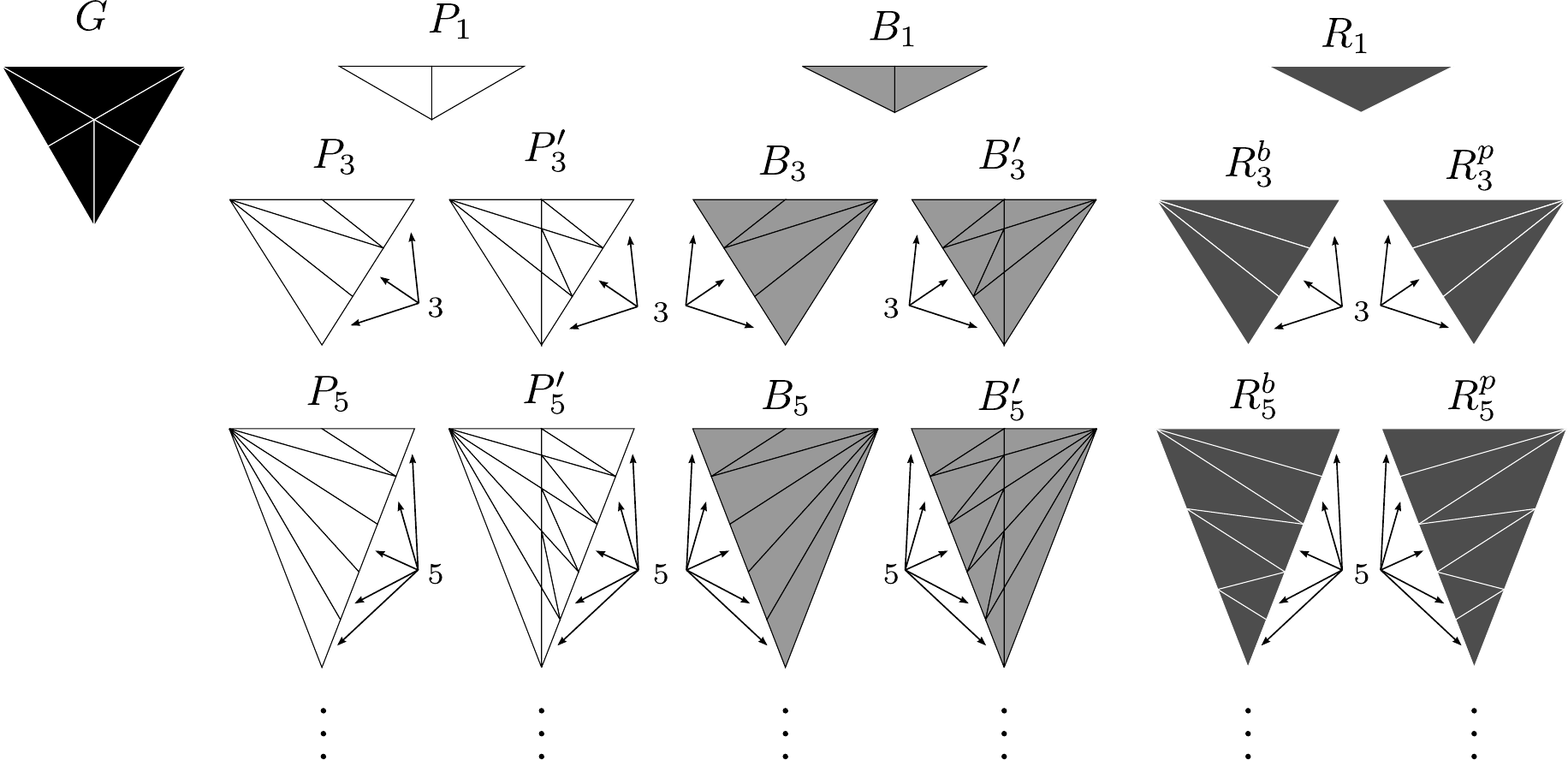}
\caption{All kinds of PL2-faces.}
\label{fig:allkinds3}
\end{center}
\end{figure}

\begin{proof}
We need to fix a notation for the head of the balloon, instead of drawing all the PL2-faces 
of the head, 
we just draw one PL2$_3$-face and put a label $u'$-$v'$. If the balloon's tail, is 
of type $P_1$, by applying a $bp$-move we can see at Fig. \ref{fig:pillowother5}
\begin{figure}[!htb]
\begin{center}
\includegraphics[width=12cm]{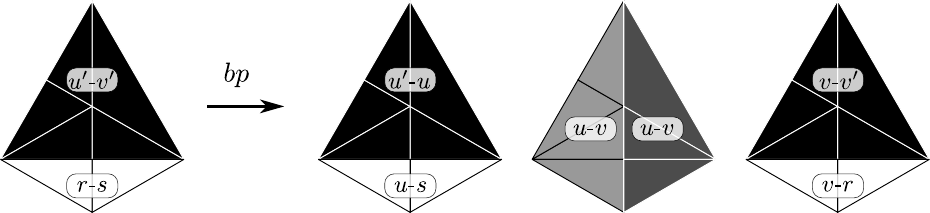}
\caption{Pillow-balloon move with balloon's tail of type $P_1$.}
\label{fig:pillowother5}
\end{center}
\end{figure}
that we get a PL2$_1$-face of type $B_3$ and a PL2$_2$-face of type $R_3^b$. 
The others PL2-faces are already known. 
If the balloon's tail, is of type $B_3$, by applying a $bp$-move, we need to refine the tail 
and the copies, otherwise we would not be able to build a pillow
 because some 2-simplices would be collapsed, so
we get two PL2$_1$-faces of type $B_3'$, one PL2$_0$-face of type $P_5$ and a PL2$_2$-face
 $R_5^p$.  The others PL2-faces are already known.

\begin{figure}[!htb]
\begin{center}
\includegraphics[width=12cm]{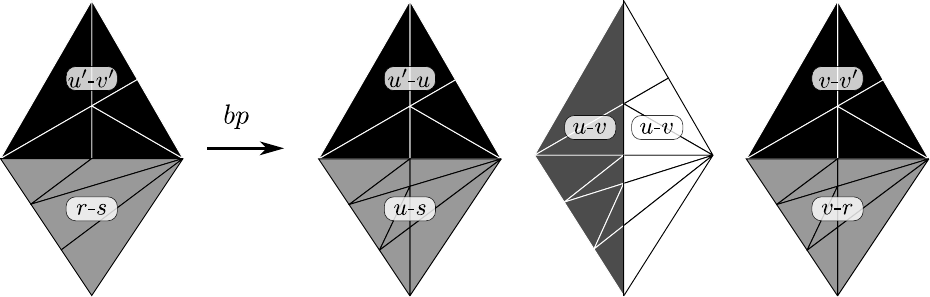}
\caption{Pillow-balloon move with balloon's tail of type $B_3$.}
\label{fig:pillowother3}
\end{center}
\end{figure}

In what follows given $X \in \{ P_{2k-1}', B_{2k-1}' \}$ denote by 
$\widehat{X}$ the copy of $X$ which is a PL2-face of the PL-tetrahedra 
whose PL2$_3$-face is below the similar PL2$_3$-face of the other PL-tetrahedra 
which completes the pillow in focus. 
In face of these conventions, if balloon's tail is of type
\begin{itemize}
 \item $P_{2k-1}$, then by applying a $bp$-move, we get types
$P_{2k-1}', \widehat{P}_{2k-1}', B_{2k+1}, R_{2k-1}^b $
 \item $P_{2k-1}'$, then by applying a $bp$-move, we get types 
$P_{2k-1}, B_{2k+1}, R_{2k-1}^b$ 
 \item $B_{2k-1}$, then by applying a $bp$-move, we get types
$B_{2k-1}',  \widehat{B}_{2k-1}', P_{2k+1}, R_{2k-1}^p$
 \item $B_{2k-1}'$, then by applying a $bp$-move, we get types 
$B_{2k-1}, P_{2k+1}, R_{2k-1}^p$ 
\end{itemize}
\end{proof}

 In the sequel we will see that with this combinatorics
attached to the PL2-faces the combinatorial $\mathcal{H}_m^\star$'s can be PL-embedded into $\mathbb{R}^3$.
It is whorthwhile to mention, in view of the above proof, that each PL2-face is refined at most one time. 
So, if $X$ is a type of PL2-face, $X'$ is its refinement, then
$X''=X'$. This idempotency is a crucial property inhibiting 
the exponentiality of our algorithm.

\subsection{Upper bounds for the number of simplices of the complex $\mathcal{H}_{n}^\star$}

Now we give quadratic upper bounds for the number of $i$-simplices, 
$i\in \{0, 1, 2\}$ of $\mathcal{H}_n^\star$.

\begin{lemma}
\label{theo:thequadratic}
The quadratic expressions $$3n^2-5n+9,~~~ 11n^2-17n+21,~~~ 8n^2-10n+12$$
 are upper bounds for the numbers of 0-simplices, 1-simplices and 2-simplices
of the colored 2-complex $\mathcal{H}_{n}^\star$ induced by a resoluble
gem $G$ with 2n vertices.
\end{lemma}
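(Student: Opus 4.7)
The plan is to proceed by induction on $m$, tracking how each $bp$-move changes the simplex counts from $\mathcal{H}_m^\star$ to $\mathcal{H}_{m+1}^\star$. First I would establish the base case: from the explicit definition of $\mathcal{H}_1^\star$ (the bloboid with vertices $z_0, z_1, z_2, a_1, b_1$ and the $z_3^j$ on the vertical axis, together with the $1$- and $2$-simplices dual to the bigons, residues and colored edges of $\mathcal{H}_1$) one reads off that the $0$-, $1$- and $2$-simplex counts of $\mathcal{H}_1^\star$ are linear in $n$, and in particular are bounded by the three quadratics at $m=1$.

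For the inductive step, I would use Proposition \ref{prop:allkinds} to case-split on the type of the balloon's tail. A $bp$-move has two combinatorial effects: it may refine the tail (when the tail is of an unrefined type $P_{2k-1}$ or $B_{2k-1}$), producing a companion copy $\widehat{X}$ in the other half of the pillow; and it installs a new pillow which contributes two new medial PL2-faces (one of color $2$ and one of color $0$ or $1$) of prescribed ranks $2k-1$ or $2k+1$, plus the two new $0$-simplices $b_{q'}$ and $b_{q''}$ from step (i) of the $bp$-move description. Because every triangulation type of rank $2k-1$ has a number of $0$-, $1$- and $2$-simplices that is linear in $k$, each individual $bp$-move contributes at most a linear function of the current maximum rank.

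The key structural input making the bound quadratic rather than exponential is twofold: (a) the idempotency $X''=X'$ emphasized after Proposition \ref{prop:allkinds} ensures that a given face is refined at most once over the whole sequence, so refinements do not cascade; and (b) the four transition rules listed at the end of the proof of Proposition \ref{prop:allkinds} increase the rank by at most $2$ per move, so the maximum rank appearing in $\mathcal{H}_m^\star$ is $O(m)$. Combining these observations, the incremental cost at move $m$ is at most $O(m)$, and telescoping $\sum_{m=1}^{n-1} O(m) = O(n^2)$ gives the asymptotic shape of all three bounds.

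The hard part will be pinning down the exact leading, linear and constant coefficients $(3,-5,9)$, $(11,-17,21)$ and $(8,-10,12)$, rather than just the $O(n^2)$ asymptotics. This demands a precise bookkeeping over the four tail-type cases, separately accounting for the new $0$-, $1$- and $2$-simplices contributed by (i) the refinement of the tail when applicable, (ii) the freshly created $R_{2k-1}^b$ or $R_{2k-1}^p$ face, (iii) the new medial $P_{2k+1}$ or $B_{2k+1}$ face, and (iv) the $0$- and $1$-simplices shared along the pillow's rim with the existing complex, which must not be double-counted. Summing the resulting arithmetic progressions from $m=1$ to $n-1$ and adding the base counts for $\mathcal{H}_1^\star$ should yield exactly the three quadratics in the statement.
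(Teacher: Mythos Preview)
Your plan is correct and coincides with the paper's own argument. The paper proceeds exactly as you outline: it records the explicit base counts for $\mathcal{H}_1^\star$ (namely $2n+5$, $10n+5$, and $10n$ for the $0$-, $1$-, and $2$-simplices), then at the $k$-th $bp$-move takes the worst case to be a tail of type $P_{2k-1}$ or $B_{2k-1}$ (which is precisely your observation that the maximum rank grows by at most $2$ per move), computes the resulting linear-in-$k$ increments ($6(k-1)+2$, $22k-16$, and an analogous expression for $2$-simplices), sums these arithmetic progressions over $k=1,\ldots,n-1$, and adds the base counts to obtain the three quadratics.
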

\begin{proof} 

\subsection*{case $i = 0$}

We know that $\mathcal{H}_{1}^\star$ has exactly 
$z_0, z_1, z_2, a_1, b_1$ and $z_3^j, j\in \{1,\ldots,2n\}$ as 0-simplices, which is $2n+5$
 0-simplices.
In first step balloon's tail has to be of type $P_1$ or $B_1$, so by applying $bp$-moves, 
we get two new 0-simplices, which inverse image is a black and 
white disk in Fig. \ref{fig:contagem1} (first part)

In second step the worse case is when balloon's 
tail is of type $P_3$ or $B_3$, generated by last $bp$-move, 
so we add $6\times1+2=8$ to the number of 0-simplices in the upper bound. See Fig. \ref{fig:contagem1} (second part).
\begin{figure}[!htb]
\begin{center}
\includegraphics[width=12cm]{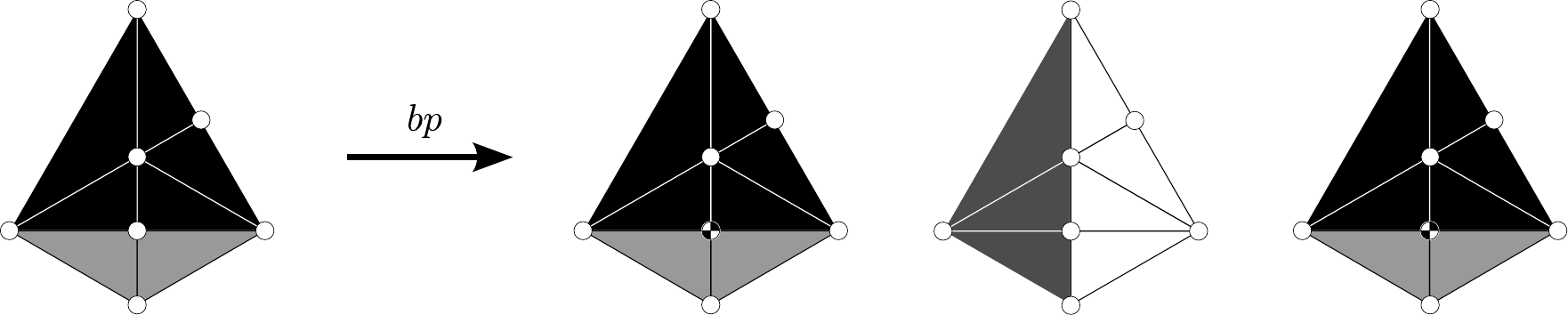}\\
\vspace{7 mm}
\includegraphics[width=12cm]{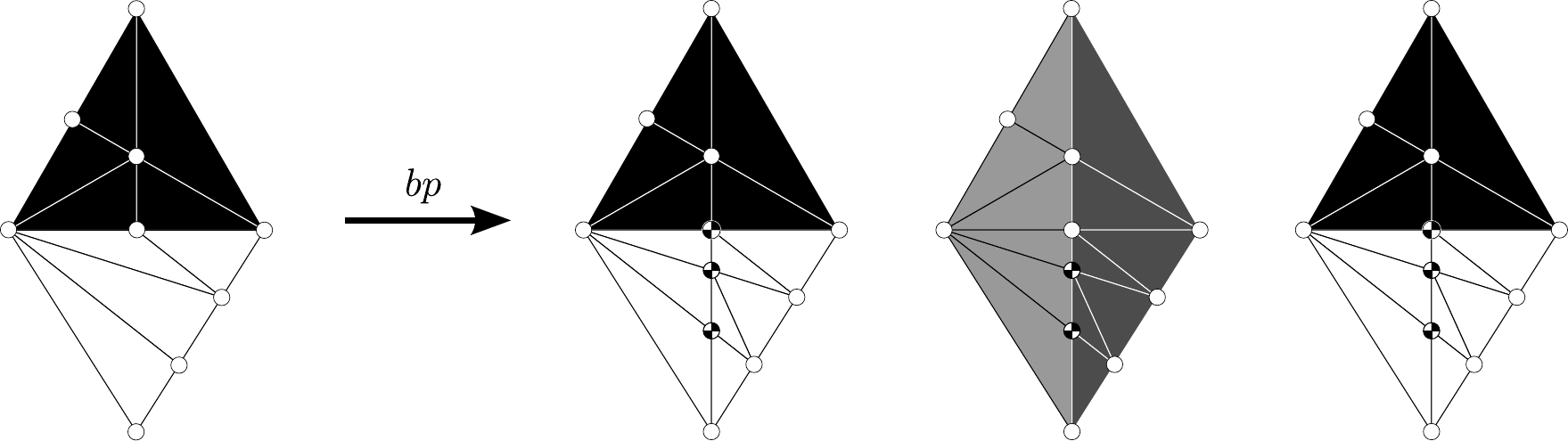}
\caption{Upper bound for the number of the 0-simplices, first and second steps.}
\label{fig:contagem1}
\end{center}
\end{figure}

In step $k$ we note that worst case is when we use the greatest ranked PL2-face generated by 
last $bp$-move, so it means that balloon's tail has to be of type $P_{2k-1}$ or $B_{2k-1}$, 
(in Fig. \ref{fig:contagem3} $j=2k-1$)
which we add $6\cdot(k-1)+2$ 0-simplices. 
\begin{figure}[!htb]
\begin{center}
\includegraphics[width=12cm]{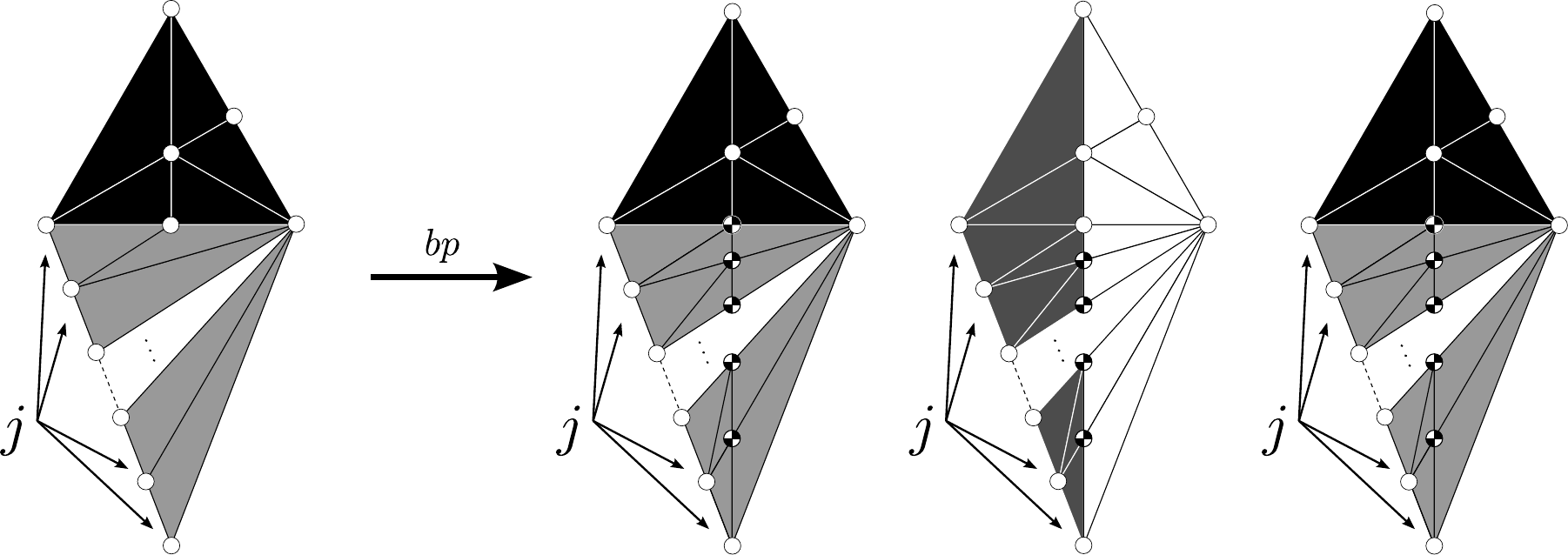}
\caption{Upper bound for the number of the 0-simplices, $j$-th step.}
\label{fig:contagem3}
\end{center}
\end{figure}
By adding the number of 0-simplices created by $bp$-moves from step 1 until step $k$ 
we get  $3k^2-k$ 0-simplices. As the number of steps is $n-1$, 
and we have at the begining $2n+5$, we have that $3n^2-5n+9$ is 
an upper bound for the number of 0-simplices.

\subsection*{case $i = 1$}

We know that $\mathcal{H}_{1}^\star$ has exactly $z_0z_1, z_0a_1, z_1b_1, z_2b_1, z_2a_1$,  
$z_0z_3^j,$ $z_1z_3^j,$ $b_1z_3^j,$ $z_2z_3^j,$ $a_1z_3^j,$ $j\in \{1,\ldots,2n\}$ as 1-simplices
which give us  $10n+5$ 1-simplices.
In first step balloon's tail has to be of type $P_1$ or $B_1$, so by applying $bp$-moves, 
we add $2\times3=6$ to the number of 1-simplices in the upper bound.

In second step we know that worst case is when balloon's tail is of type $P_3$ or $B_3$, see Fig. 
\ref{fig:cont1simplex1} (second part) and 
we add $8+2\times11 -2=28$ to the number of 1-simplices in the upper bound.
\begin{figure}[!htb]
\begin{center}
\includegraphics[width=12cm]{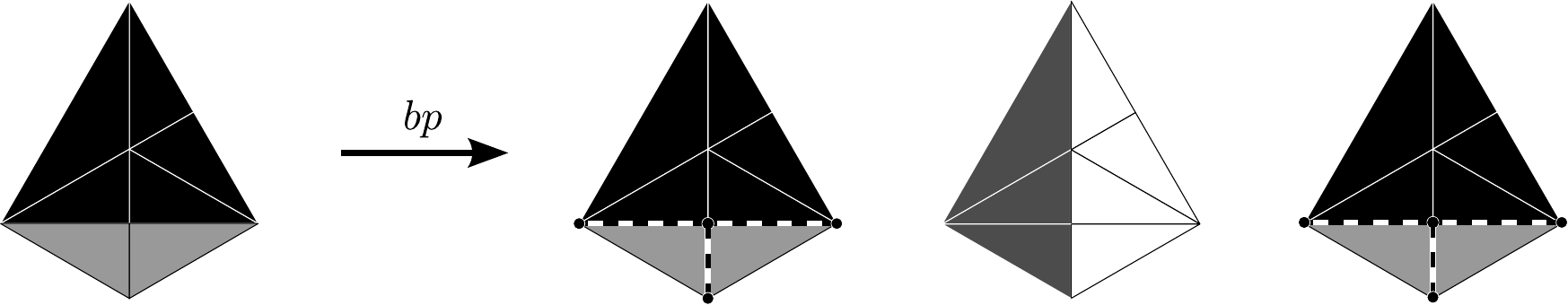}\\
\vspace{7 mm}
\includegraphics[width=12cm]{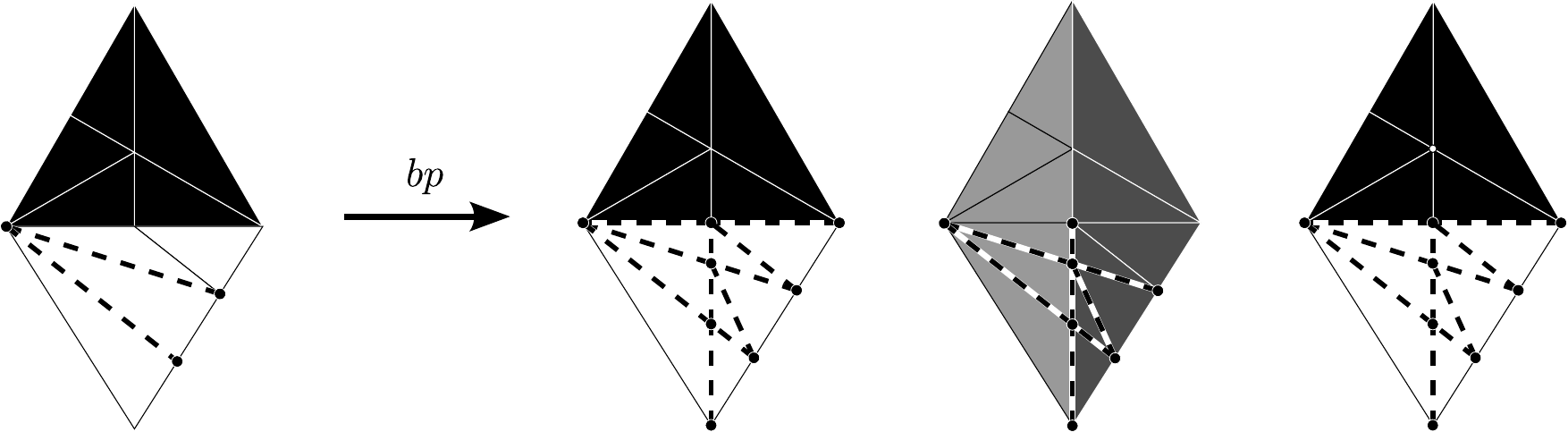}
\caption{Upper bound for the number of the 1-simplices, first and second steps.}
\label{fig:cont1simplex1}
\end{center}
\end{figure}

In step $k$,  as in the 0-simplex case, the worst case is when we use the greatest ranked PL2-face generated by
 last $bp$-move, so it means that balloon's tail has to be of type $P_{2k-1}$ or $B_{2k-1}$, 
(in Fig. \ref{fig:cont1simplex3}, $j=2k-1$) and we add $2(3+8(k-1))+8(k-1)-2(k-1)=22k-16$ to the 
number of 1-simplices in the upper bound.
\begin{figure}[!htb]
\begin{center}
\includegraphics[width=12cm]{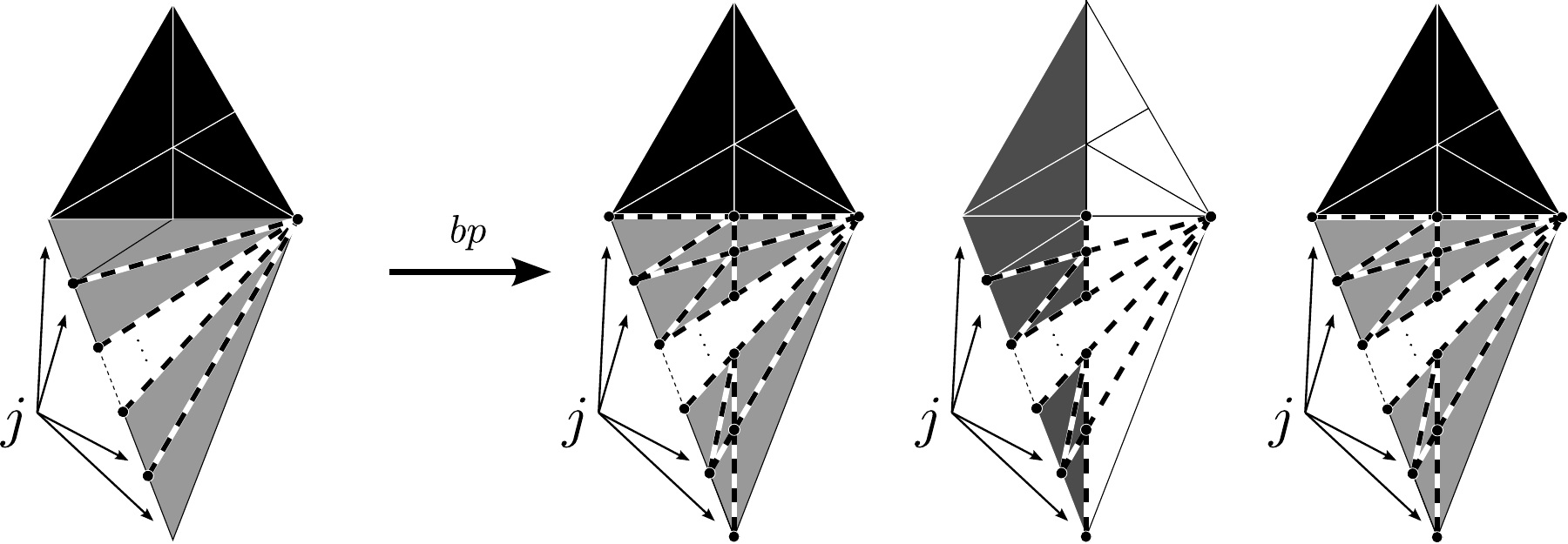}
\caption{Upper bound for the number of the 1-simplices, $j$-th step.}
\label{fig:cont1simplex3}
\end{center}
\end{figure}
An upper bound for the number of 1-simplices created by $bp$-moves from step 1 until step $k$ is
 $11k^2-5k$. As the number of steps is $n-1$, 
and we have at the begining $10n+5$, it follows that, 
by adding the arithmetical progression, $11n^2-17n+21$ is 
an upper bound for the number of 1-simplices. 

\subsection*{case $i = 2$}

We know that $\mathcal{H}_{1}^\star$ has exactly $z_0z_3^jz_1, z_1z_3b_1, z_2b_1z_3^j, z_2z_3^ja_1, z_0a_1z_3^j$,
 $j\in \{1,\ldots,2n\}$ as 2-simplices
which give us  $10n$ 2-simplices.

In first step balloon's tail has to be of type $P_1$ or $B_1$, so by applying $bp$-moves, we add $2\times2$ 2-simplices.
In second step we add $3\times8$ and subtract $4$

\begin{figure}[!htb]
\begin{center}
\includegraphics[width=12cm]{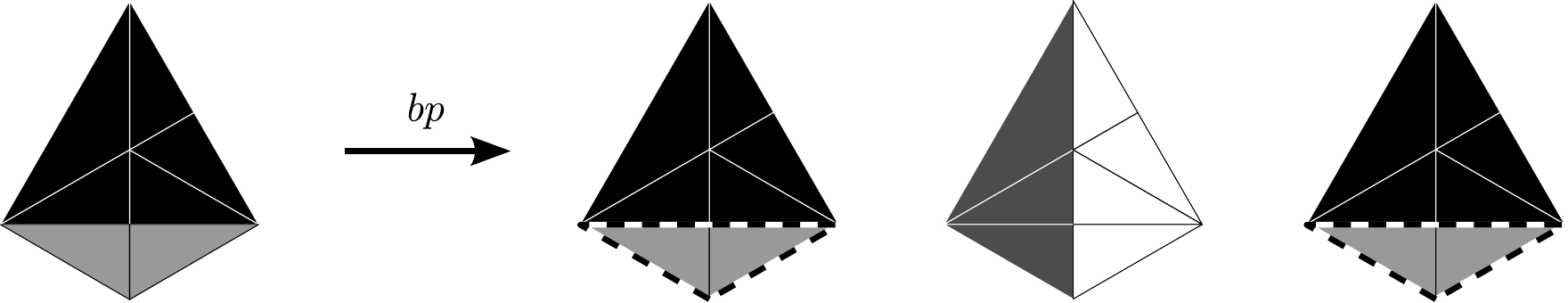}\\
\vspace{7 mm}
\includegraphics[width=12cm]{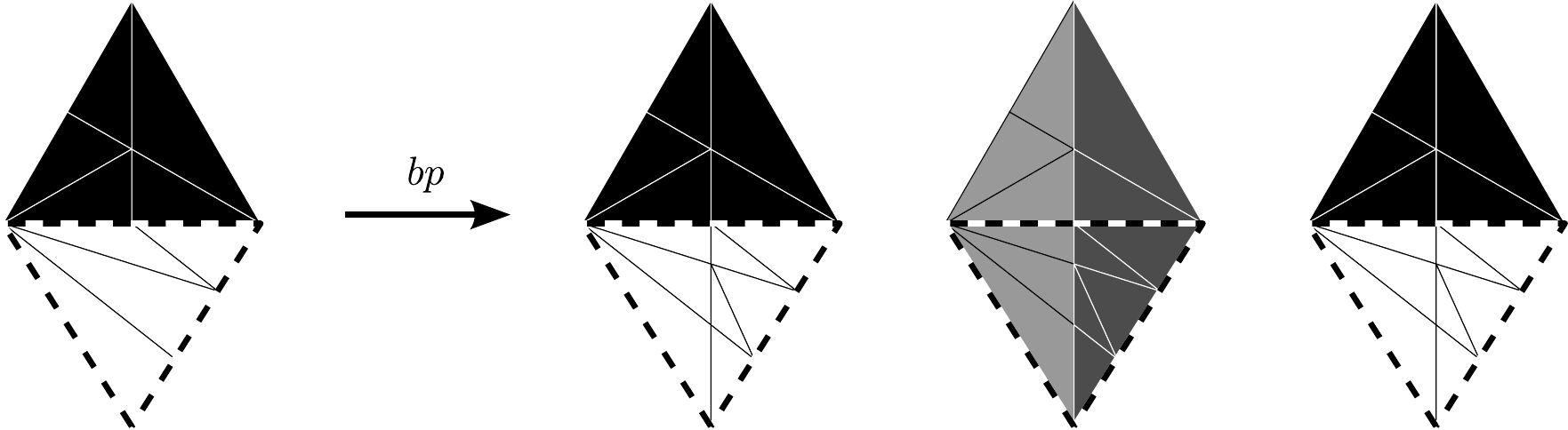}
\caption{Upper bound for the number of the 2-simplices, first and second steps.}
\label{fig:cont2simplex1}
\end{center}
\end{figure}

\begin{figure}[!htb]
\begin{center}
\includegraphics[width=12cm]{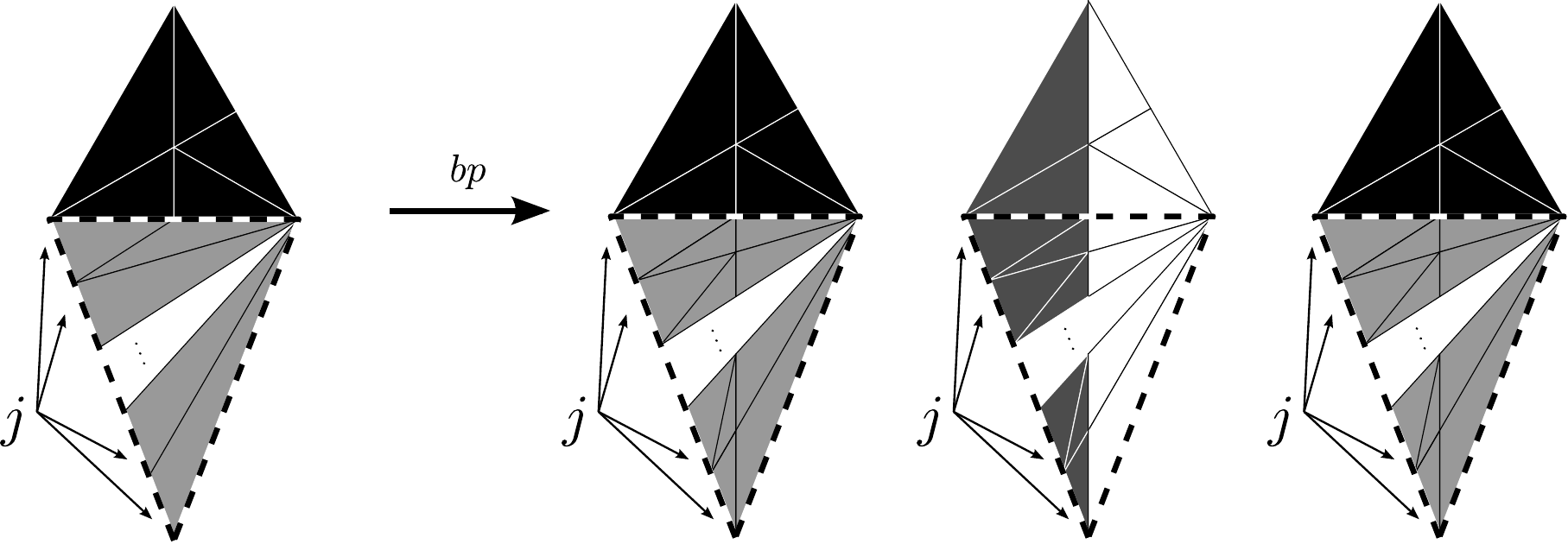}
\caption{Upper bound for the number of the 2-simplices, $j$-th step.}
\label{fig:cont2simplex4}
\end{center}
\end{figure}

As we know, the worst case in step $k$ is when balloon's tail is of type $P_{2k-1}$, or $B_{2k-1}$. 
By apply $bp$-move we add $3\cdot(6\cdot(k-4))$ and subtract $2k$, for $k\geq2.$

By adding the number of 2-simplices created by $bp$-moves from step 1 until step $k$ 
we get $8k^2-4k$ 2-simplices. As the number of steps is $n-1$, 
and we have at the beginning $10n$ 2-simplices, $8n^2-10n+12$ is 
an upper bound for the number of 2-simplices. 
\end{proof}

\bibliographystyle{plain}
\bibliography{bibtexIndex.bib}


\subsection{Appendix A:\\
 sequence of $bp$-move corresponding to $r_5^{24}$}

\begin{figure}
\begin{center}
\includegraphics[width=10.5cm]{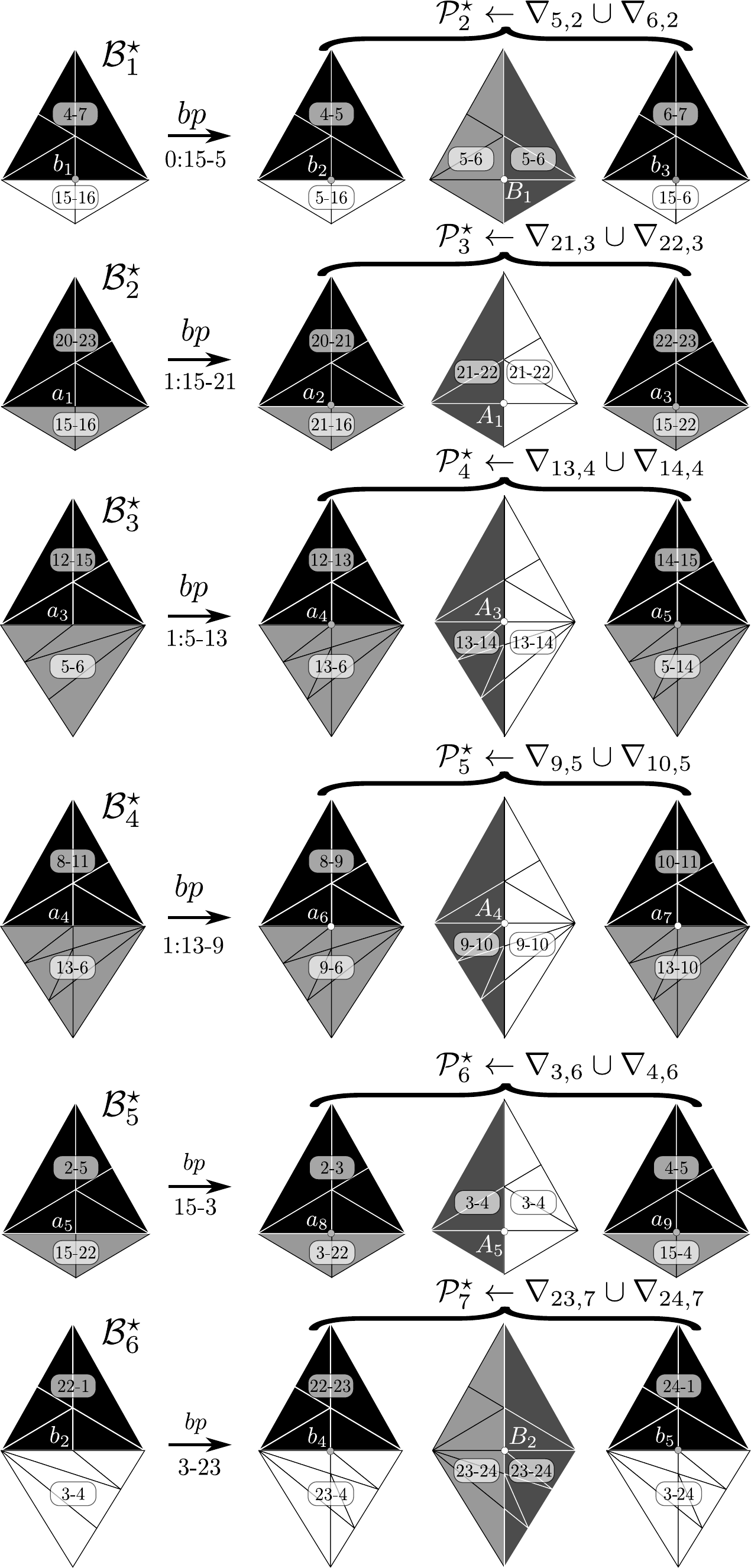} \\
\caption{\sf sequence of $bp$-moves, $m = 1,\ldots,6$  ($r_5^{24}$-example).}
\label{fig:Hstarsequence01}
\end{center}
\end{figure}

\begin{figure}
\begin{center}
\includegraphics[width=10.5cm]{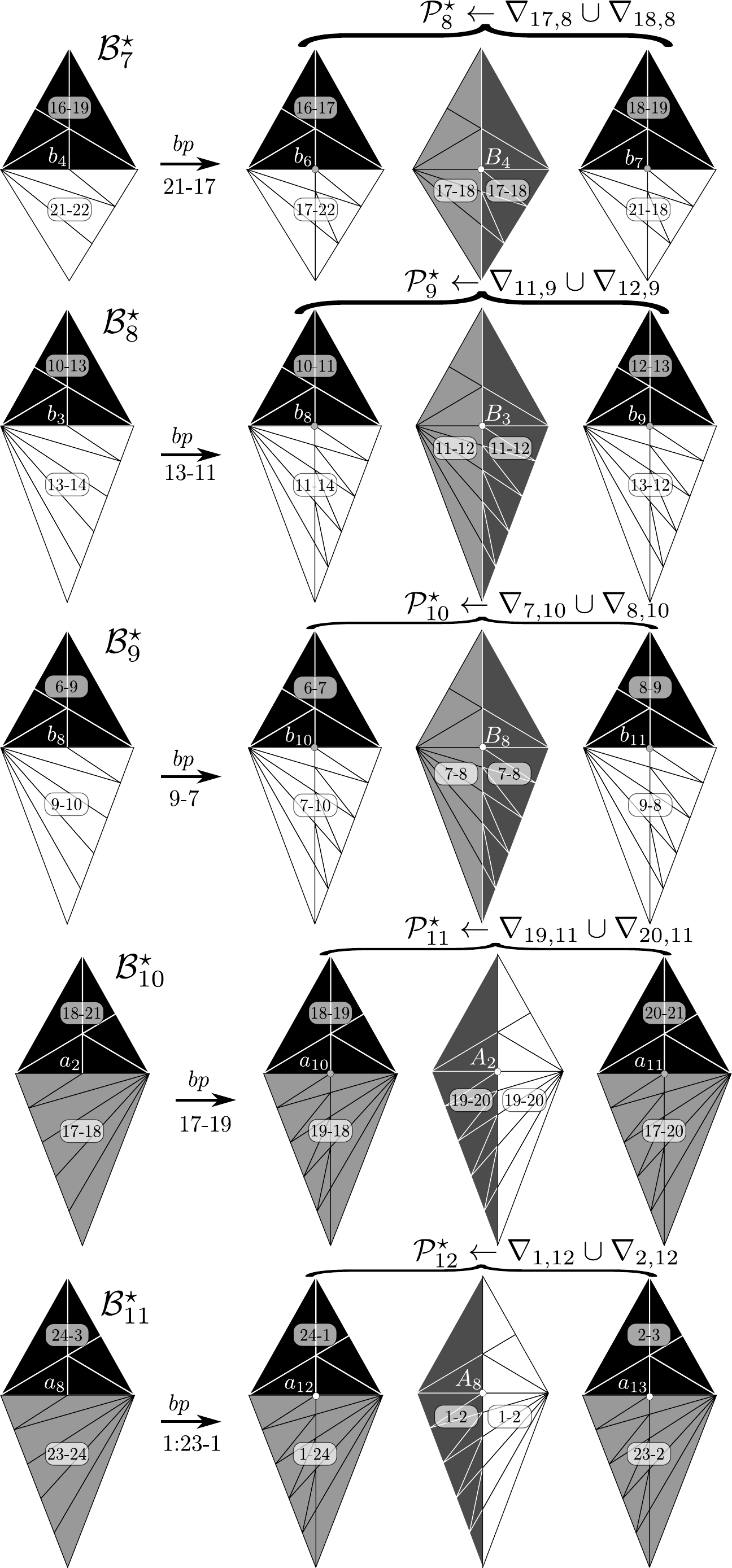} \\
\caption{\sf sequence of $bp$-moves, $m = 7,\ldots,11$  ($r_5^{24}$-example).}
\label{fig:Hstarsequence02}
\end{center}
\end{figure}

\vspace{10mm}
\begin{center}

\hspace{7mm}
\begin{tabular}{l}
   S\'ostenes L. Lins\\
   Centro de Inform\'atica, UFPE \\
   Recife--PE \\
   Brazil\\
   sostenes@cin.ufpe.br
\end{tabular}
\hspace{20mm}
\begin{tabular}{l}
   Ricardo N. Machado\\
   Núcleo de Formação de Docentes, UFPE\\
   Caruaru--PE \\
   Brazil\\
   ricardonmachado@gmail.com
\end{tabular}

\end{center}

\end{document}